\documentclass[11pt]{article}

\usepackage[T1]{fontenc}
\usepackage{lmodern}
\usepackage{amsmath,amssymb,amsthm}
\usepackage{microtype}
\usepackage{geometry}
\usepackage[hidelinks]{hyperref}
\hypersetup{
  pdftitle={Quadratic Reduction and Classical Multiple-Index Fibonacci--Lucas Identities},
  pdfauthor={Marco Mantovanelli},
  pdfsubject={Lucas sequences, Dickson polynomials, matrix powers},
  pdfkeywords={Fibonacci numbers, Lucas sequences, Dickson polynomials, Cayley--Hamilton theorem}
}
\title{Quadratic Reduction and Classical Multiple-Index\
Fibonacci--Lucas Identities}
\author{Marco Mantovanelli\\[0.3em]
{\small Independent Researcher}\\
{\small \href{mailto:marco@mantovanelli.de}{\texttt{marco@mantovanelli.de}}}\\
{\small ORCID: \href{https://orcid.org/0009-0002-0631-293X}{\texttt{0009-0002-0631-293X}}}}
\date{}

\newtheorem{theorem}{Theorem}[section]
\newtheorem{corollary}[theorem]{Corollary}
\newtheorem{proposition}[theorem]{Proposition}
\theoremstyle{remark}
\newtheorem{remark}[theorem]{Remark}

\newcommand{\Aalg}{\mathcal{A}}

\begin{document}

\maketitle

\begin{abstract}
We place several classical identities in a common algebraic framework.  If an
element $x$ of a unital algebra over a commutative ring $R$ satisfies
$x^2-tx+d\,1=0$, then every positive power of $x$ is a linear combination of
$x$ and $1$, with coefficients given by the generic Lucas sequence, or
equivalently by Dickson polynomials of the second kind.  The Cayley--Hamilton
theorem gives the corresponding standard formula for powers of $2\times2$
matrices.  Applying suitable $R$-linear functionals to this formula yields
uniform derivations of multiple-index identities for Fibonacci, Lucas, and
generalized Fibonacci sequences.  The Fibonacci identity obtained in this
way appears in Mc~Laughlin (2004), where it is attributed to Johnson (2003),
and was recently reproved by Vorobtsov (2026).  The purpose of this note is
expository: to make explicit the common mechanism linking the
quadratic-reduction, Lucas--Dickson polynomial, and matrix formulations.
\end{abstract}

\medskip
\noindent\textbf{Revision note.}
The first version stated the formula $F_{nm}=F_nU_{m-1}(L_n/2)$ without a
parity restriction.  This identity holds for all $m$ when $n$ is even, but
fails in general for odd $n$; for example, it already fails at $n=1$, $m=3$.
Section~\ref{sec:fibonacci} gives the correct general Dickson-polynomial
statement and the corresponding parity-dependent Chebyshev specialization.
This revision also makes the earlier occurrence and derivation in
Mc~Laughlin~\cite{McLaughlin2004} explicit.

\medskip
\noindent\textbf{Keywords:} quadratic elements; Lucas sequences; Dickson
polynomials; Cayley--Hamilton theorem; matrix powers; Fibonacci numbers

\smallskip
\noindent\textbf{MSC 2020:} 11B39, 15A24, 05A19

\section{Introduction}

Multiple-index identities for Fibonacci and Lucas numbers are naturally
connected with second-order recurrences, Chebyshev polynomials, and powers of
the Fibonacci matrix; see, for example,
\cite{Koshy2001,Vajda1989,MasonHandscomb2003}.  A recent paper of
Vorobtsov~\cite{Vorobtsov2026} gives binomial expansions for $F_{nm}$,
$L_{nm}$, and generalized Fibonacci sequences in powers of $L_n$.

The relevant formulas have a longer history.  In particular,
Mc~Laughlin~\cite[Eq.~(11)]{McLaughlin2004} attributes the expansion for
$F_{nm}$ to an online note by R.~C. Johnson from 2003.  Mc~Laughlin derives
it from the same two ingredients used below: a formula for powers of a
$2\times2$ matrix and the identity $A^{nm}=(A^n)^m$ for the Fibonacci matrix.
The matrix-power formula itself is classical.  Earlier forms include those of
Blatz~\cite{Blatz1968} and Williams~\cite{Williams1992}; Mc~Laughlin also
discusses an essentially equivalent method going back to Jacobsthal.

Our coefficient polynomials are likewise standard.  They are the generic
Lucas sequence of the first kind and, after an index shift, the Dickson
polynomials of the second kind; see
\cite{LidlMullenTurnwald1993,BallotWilliams2023}.  Their relation with
Cayley--Hamilton and matrix traces is discussed, for example, in
\cite{BrandiRicci2020,Pain2024}.

Accordingly, this note does not claim a new matrix-power formula or a new
Fibonacci identity.  Its aim is to give a short ring-theoretic formulation
that makes the common mechanism transparent.  A useful feature of this
formulation is that a single linear-functional identity yields, without
separate Binet-formula calculations, the Fibonacci, Lucas, and generalized
Fibonacci cases.

\section{Standard quadratic reduction}
\label{sec:quadratic}

Let $T,D$ be indeterminates.  Define polynomials
$P_m(T,D)\in\mathbb Z[T,D]$ by
\begin{equation}
P_0(T,D)=0,\qquad P_1(T,D)=1,
\label{eq:Pinitial}
\end{equation}
and
\begin{equation}
P_{m+1}(T,D)=T P_m(T,D)-D P_{m-1}(T,D)
\qquad (m\geq1).
\label{eq:Prec}
\end{equation}
Their generating function is
\begin{equation}
\sum_{m\geq0}P_m(T,D)z^m
=\frac{z}{1-Tz+Dz^2}.
\label{eq:Pgf}
\end{equation}
Expanding the right-hand side as a formal power series gives
\begin{equation}
P_m(T,D)=
\sum_{i=0}^{\lfloor(m-1)/2\rfloor}
\binom{m-1-i}{i}T^{m-1-2i}(-D)^i
\qquad (m\geq1).
\label{eq:Pexplicit}
\end{equation}

We use the following standard reduction identity.

\begin{theorem}[Quadratic reduction]
\label{thm:quadratic-reduction}
Let $R$ be a commutative ring with identity, let $\Aalg$ be an associative
unital $R$-algebra, and regard scalars through the structure map
$R\to Z(\Aalg)$.  Suppose that $x\in\Aalg$ satisfies
\begin{equation}
x^2-tx+d\,1_{\Aalg}=0
\label{eq:quadratic-relation}
\end{equation}
for some $t,d\in R$.  Then, for every $m\geq1$,
\begin{equation}
x^m=P_m(t,d)x-dP_{m-1}(t,d)\,1_{\Aalg}.
\label{eq:quadratic-reduction}
\end{equation}
\end{theorem}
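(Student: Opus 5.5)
We argue by induction on $m\geq1$, using only the recurrence \eqref{eq:Prec}, the initial values \eqref{eq:Pinitial}, and the relation \eqref{eq:quadratic-relation}. Throughout, we write $P_m$ for $P_m(t,d)\in R$, the image of $P_m(T,D)$ under $T\mapsto t$, $D\mapsto d$. Since this specialization is a ring homomorphism $\mathbb Z[T,D]\to R$, the specialized values satisfy $P_0=0$, $P_1=1$ and $P_{m+1}=tP_m-dP_{m-1}$ for $m\geq1$.

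\emph{Base case.} For $m=1$ the right-hand side of \eqref{eq:quadratic-reduction} is $P_1x-dP_0\,1_{\Aalg}=x$, as required. It is also convenient to check $m=2$ directly: $P_2=tP_1-dP_0=t$, so the claim reads $x^2=tx-d\,1_{\Aalg}$, which is exactly \eqref{eq:quadratic-relation}.

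\emph{Inductive step.} Assume $x^m=P_mx-dP_{m-1}1_{\Aalg}$ for some $m\geq1$. Multiply on the left by $x$. Because scalars act through the structure map into the centre $Z(\Aalg)$, they commute with $x$, so
\[
x^{m+1}=P_m x^2-dP_{m-1}x.
\]
Substituting $x^2=tx-d\,1_{\Aalg}$ gives
\[
x^{m+1}=(tP_m-dP_{m-1})x-dP_m\,1_{\Aalg}.
\]
By the recurrence, $tP_m-dP_{m-1}=P_{m+1}$, so this is $x^{m+1}=P_{m+1}x-dP_m\,1_{\Aalg}$, which is \eqref{eq:quadratic-reduction} for $m+1$.

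There is no real obstacle here. The only points needing care are bookkeeping: the scalars $t,d,P_m$ must be central, which holds because they come from $R$ via the structure map, and associativity is what justifies writing $x\cdot x^m=x^{m+1}$. Commutativity of $\Aalg$ itself is not needed, since every element involved is a polynomial in the single element $x$ with coefficients in $R$.
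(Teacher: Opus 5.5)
Your proof is correct and follows the paper's argument: induction on $m$, with the base case $m=1$ coming from $P_1=1$ and $P_0=0$. The inductive step multiplies by $x$, substitutes $x^2=tx-d\,1_{\Aalg}$, and applies the recurrence \eqref{eq:Prec}; your remarks on central scalars and on specializing $\mathbb Z[T,D]\to R$ just make explicit what the paper leaves implicit.
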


\begin{proof}
For $m=1$, Equation~\eqref{eq:quadratic-reduction} follows from
$P_1=1$ and $P_0=0$.  If it holds for $m$, then
\begin{align*}
x^{m+1}
&=P_m(t,d)x^2-dP_{m-1}(t,d)x\\
&=\bigl(tP_m(t,d)-dP_{m-1}(t,d)\bigr)x
   -dP_m(t,d)\,1_{\Aalg}\\
&=P_{m+1}(t,d)x-dP_m(t,d)\,1_{\Aalg},
\end{align*}
by Equation~\eqref{eq:Prec}.  The result follows by induction.
\end{proof}

\begin{remark}[Universal form]
Let $\overline X$ be the image of $X$ in
\[
\mathbb Z[T,D,X]/(X^2-TX+D).
\]
Then Equation~\eqref{eq:quadratic-reduction} is the polynomial identity
\[
\overline X^m=P_m(T,D)\overline X-DP_{m-1}(T,D).
\]
Every instance of Theorem~\ref{thm:quadratic-reduction} is obtained from this
identity by specialization.  No injectivity of the structure map
$R\to\Aalg$ is required.
\end{remark}

\section{Lucas, Dickson, and Chebyshev polynomials}
\label{sec:polynomials}

In the standard notation for Lucas sequences, $P_m(T,D)$ is the generic
Lucas sequence of the first kind with parameters $(T,D)$.  If
$E_r(X,A)$ denotes the Dickson polynomial of the second kind,
\[
E_r(X,A)=
\sum_{i=0}^{\lfloor r/2\rfloor}
\binom{r-i}{i}X^{r-2i}(-A)^i,
\]
then Equation~\eqref{eq:Pexplicit} says exactly that
\begin{equation}
P_m(T,D)=E_{m-1}(T,D).
\label{eq:Dickson-second}
\end{equation}

For later use, define the companion sequence
\begin{equation}
Q_0(T,D)=2,\qquad Q_1(T,D)=T,
\qquad Q_{m+1}=TQ_m-DQ_{m-1}.
\label{eq:Qrec}
\end{equation}
It is the generic Lucas sequence of the second kind, equivalently the Dickson
polynomial of the first kind, and for $m\geq1$ satisfies
\begin{equation}
Q_m(T,D)=TP_m(T,D)-2DP_{m-1}(T,D).
\label{eq:QP}
\end{equation}
For $m\geq1$, its explicit form is
\begin{equation}
Q_m(T,D)=
\sum_{i=0}^{\lfloor m/2\rfloor}
\frac{m}{m-i}\binom{m-i}{i}
T^{m-2i}(-D)^i.
\label{eq:Qexplicit}
\end{equation}
Indeed, substituting Equation~\eqref{eq:Pexplicit} into
Equation~\eqref{eq:QP} shows that the coefficient of
$T^{m-2i}(-D)^i$ is
\[
\frac{m}{m-i}\binom{m-i}{i}
=\binom{m-1-i}{i}+2\binom{m-1-i}{i-1},
\]
where a binomial coefficient with lower index $-1$ is understood to be zero.
In particular, the rational-looking coefficients in
Equation~\eqref{eq:Qexplicit} are integers.

The relation with the classical Chebyshev polynomials of the second kind also
has a division-free polynomial form.  Only the subsequent expression in
terms of arbitrary parameters $t$ and $d$ requires invertibility assumptions.

\begin{proposition}[Chebyshev specialization]
\label{prop:chebyshev}
For every $m\geq1$, the division-free identity
\begin{equation}
P_m(2\Delta Z,\Delta^2)=\Delta^{m-1}U_{m-1}(Z)
\label{eq:chebyshev-division-free}
\end{equation}
holds in $\mathbb Z[\Delta,Z]$.  Consequently, let $S$ be a commutative ring,
let $t,d\in S$, and suppose that $2\in S$ is
invertible and that there is a unit $\delta\in S$ satisfying $\delta^2=d$.
Then
\begin{equation}
P_m(t,d)=\delta^{m-1}
U_{m-1}\!\left(\frac{t}{2\delta}\right)
\qquad (m\geq1),
\label{eq:chebyshev-specialization}
\end{equation}
where $U_r$ is the classical Chebyshev polynomial of the second kind.
\end{proposition}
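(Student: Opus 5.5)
We prove the division-free identity \eqref{eq:chebyshev-division-free} by induction on $m$, comparing recurrences. We use the classical definition of $U_r$ by $U_0(Z)=1$, $U_1(Z)=2Z$, and $U_{r+1}(Z)=2ZU_r(Z)-U_{r-1}(Z)$. It is convenient to extend the claim to $m=0$ by setting $U_{-1}=0$; this is consistent with the recurrence at $r=0$, and the $m=0$ case then reads $P_0=0=\Delta^{-1}\cdot 0$. To stay strictly inside $\mathbb Z[\Delta,Z]$, we avoid negative powers and check $m=1$ and $m=2$ directly. For $m=1$, $P_1=1=\Delta^0U_0(Z)$. For $m=2$, the recurrence \eqref{eq:Prec} gives $P_2=T$, so $P_2(2\Delta Z,\Delta^2)=2\Delta Z=\Delta U_1(Z)$.

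For the inductive step, assume the identity holds for $m$ and $m-1$ with $m\ge2$. Then \eqref{eq:Prec}, specialized at $T=2\Delta Z$ and $D=\Delta^2$, gives
\[
P_{m+1}=2\Delta Z\,\Delta^{m-1}U_{m-1}(Z)-\Delta^2\,\Delta^{m-2}U_{m-2}(Z)
=\Delta^{m}\bigl(2ZU_{m-1}(Z)-U_{m-2}(Z)\bigr)=\Delta^mU_m(Z).
\]
All operations here are polynomial, so the identity holds in $\mathbb Z[\Delta,Z]$. As an alternative check, the generating function \eqref{eq:Pgf} becomes $z/(1-2Z\Delta z+\Delta^2z^2)$. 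Comparing it with $\sum_r U_r(Z)w^r=1/(1-2Zw+w^2)$ at $w=\Delta z$ gives the same identity.

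For the specialization \eqref{eq:chebyshev-specialization}, let $S$ satisfy the stated hypotheses and set $s=t/(2\delta)$, which makes sense because $2$ and $\delta$ are units. Consider the ring homomorphism $\mathbb Z[\Delta,Z]\to S$ sending $\Delta\mapsto\delta$ and $Z\mapsto s$. Under this map $2\Delta Z\mapsto 2\delta s=t$ and $\Delta^2\mapsto\delta^2=d$. Since $P_m$ has integer coefficients, the image of $P_m(2\Delta Z,\Delta^2)$ is $P_m(t,d)$, and the image of the right-hand side is $\delta^{m-1}U_{m-1}(s)$.

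The only delicate point is bookkeeping rather than any genuine obstacle. We must keep the polynomial identity free of division, which is why we use two base cases instead of the $m=0$ extension. We must also note that invertibility of $2$ and $\delta$ enters only through the specialization step.
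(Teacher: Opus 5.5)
Your proposal is correct and is essentially the paper's proof: check $m=1,2$ directly, note that the Chebyshev recurrence multiplied by the appropriate power of $\Delta$ is exactly the recurrence for $P_m$ at $(T,D)=(2\Delta Z,\Delta^2)$, then specialize $\Delta\mapsto\delta$, $Z\mapsto t/(2\delta)$. Writing that specialization as an explicit ring homomorphism $\mathbb Z[\Delta,Z]\to S$, and adding the generating-function cross-check, adds useful detail without changing the route.
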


\begin{proof}
The two sides of Equation~\eqref{eq:chebyshev-division-free} agree for
$m=1,2$.  The Chebyshev recurrence
$U_{r+1}(Z)=2ZU_r(Z)-U_{r-1}(Z)$ becomes, after multiplication by the
appropriate power of $\Delta$, precisely the recurrence
\eqref{eq:Prec} with $(T,D)=(2\Delta Z,\Delta^2)$.  The divided formula
follows on setting $\Delta=\delta$ and $Z=t/(2\delta)$.
\end{proof}

When these hypotheses are unavailable, Equations~\eqref{eq:Pexplicit} and
\eqref{eq:Dickson-second} remain valid without modification, including over
rings of characteristic $2$ and when $d$ is a zero divisor.

\section{Matrices and linear functionals}
\label{sec:matrices}

\begin{corollary}[Powers of a $2\times2$ matrix]
\label{cor:matrix-powers}
Let $R$ be a commutative ring with identity and let $M\in M_2(R)$.  Set
\[
t=\operatorname{tr}(M),\qquad d=\det(M).
\]
Then, for every $m\geq1$,
\begin{equation}
M^m=P_m(t,d)M-dP_{m-1}(t,d)I.
\label{eq:matrix-powers}
\end{equation}
\end{corollary}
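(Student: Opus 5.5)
The plan is to apply Theorem~\ref{thm:quadratic-reduction} with $\Aalg=M_2(R)$ and $x=M$, where $R$ acts on $M_2(R)$ by scalar matrices $r\mapsto rI$. This structure map lands in the centre, since scalar matrices commute with every matrix. So $M_2(R)$ is an associative unital $R$-algebra of exactly the kind the theorem allows. The only hypothesis left to check is the quadratic relation \eqref{eq:quadratic-relation} with $t=\operatorname{tr}(M)$ and $d=\det(M)$, that is, the Cayley--Hamilton identity
\[
M^2-\operatorname{tr}(M)\,M+\det(M)\,I=0 .
\]

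For a general commutative ring $R$, I will verify Cayley--Hamilton by direct computation rather than cite the field case. Write
\[
M=\begin{pmatrix}a&b\\ c&e\end{pmatrix},
\qquad\text{so}\qquad
M^2=\begin{pmatrix}a^2+bc&b(a+e)\\ c(a+e)&e^2+bc\end{pmatrix}.
\]
Subtracting $(a+e)M$ gives the diagonal entries $a^2+bc-a^2-ae=bc-ae$ and $e^2+bc-ae-e^2=bc-ae$. The off-diagonal entries vanish. Adding $(ae-bc)I$ then produces the zero matrix. This uses only commutativity of $R$, so no assumption on $R$ beyond the hypotheses of the corollary is needed.

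With the relation established, Equation~\eqref{eq:quadratic-reduction} for $x=M$ reads $M^m=P_m(t,d)M-dP_{m-1}(t,d)I$ for every $m\ge1$, which is \eqref{eq:matrix-powers}. Nothing here is a real obstacle. The only point needing care is working over an arbitrary commutative ring, where one cannot argue via eigenvalues or extend to an algebraic closure. The entrywise check above is what handles this.
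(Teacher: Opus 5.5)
Your proposal is correct and follows the paper's proof: invoke the $2\times2$ Cayley--Hamilton identity $M^2-tM+dI=0$ over the commutative ring $R$, then apply Theorem~\ref{thm:quadratic-reduction} with $\Aalg=M_2(R)$ and $x=M$. The only difference is that you verify Cayley--Hamilton by an explicit entrywise computation, which is correct, where the paper simply cites the identity.
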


\begin{proof}
The $2\times2$ Cayley--Hamilton identity over a commutative ring is
$M^2-tM+dI=0$.  Apply Theorem~\ref{thm:quadratic-reduction}.
\end{proof}

The scalar coefficients in Equation~\eqref{eq:matrix-powers} depend only on
the trace and determinant; the formula still contains the matrix $M$ itself.
This qualification avoids the misleading suggestion that $M^m$ is determined
by $t$ and $d$ alone.

\begin{corollary}[Linear-functional form]
\label{cor:linear-functional}
Under the hypotheses of Corollary~\ref{cor:matrix-powers}, let
$\lambda:M_2(R)\to R$ be $R$-linear.  Then
\begin{equation}
\lambda(M^m)=P_m(t,d)\lambda(M)
-dP_{m-1}(t,d)\lambda(I).
\label{eq:linear-functional}
\end{equation}
In particular,
\begin{equation}
\operatorname{tr}(M^m)=Q_m(t,d).
\label{eq:trace-powers}
\end{equation}
\end{corollary}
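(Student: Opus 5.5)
The first identity will come straight from Corollary~\ref{cor:matrix-powers}, and the trace formula will then follow by specializing $\lambda$ and comparing with Equation~\eqref{eq:QP}.

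\textbf{Step 1: the general identity.} Corollary~\ref{cor:matrix-powers} gives the matrix equation
\[
M^m=P_m(t,d)M-dP_{m-1}(t,d)I
\]
in $M_2(R)$ for every $m\geq1$. Applying $\lambda$ to both sides, $R$-linearity pulls out the scalar coefficients $P_m(t,d)$ and $dP_{m-1}(t,d)$, since these lie in $R$ and act on $M_2(R)$ as scalars. This yields Equation~\eqref{eq:linear-functional} directly; nothing beyond linearity is needed.

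\textbf{Step 2: the trace formula.} I will take $\lambda=\operatorname{tr}$, which is $R$-linear, with $\operatorname{tr}(M)=t$ and $\operatorname{tr}(I)=2$. Equation~\eqref{eq:linear-functional} then gives
\[
\operatorname{tr}(M^m)=tP_m(t,d)-2dP_{m-1}(t,d).
\]
By Equation~\eqref{eq:QP}, specialized from $\mathbb Z[T,D]$ to $(t,d)$ via the ring homomorphism $\mathbb Z[T,D]\to R$, this equals $Q_m(t,d)$ for $m\geq1$. One could also note that $m=0$ holds trivially, since $\operatorname{tr}(I)=2=Q_0$.

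\textbf{Main obstacle.} The only point needing care is that Equation~\eqref{eq:QP} was stated but not proved in the text. If a justification is required, I will verify it by induction on $m$. The base cases are $m=1$, where the right side is $T\cdot1-0=T=Q_1$, and $m=2$, where it is $T^2-2D=TQ_1-DQ_0=Q_2$. The inductive step holds because both sides satisfy the recurrence $X_{m+1}=TX_m-DX_{m-1}$: the right side does so because it is a fixed linear combination of the shifted sequences $P_m$ and $P_{m-1}$, each of which obeys Equation~\eqref{eq:Prec}.
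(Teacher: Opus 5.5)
Your proposal is correct and follows the paper's proof: apply $\lambda$ to the matrix-power formula of Corollary~\ref{cor:matrix-powers}, then take $\lambda=\operatorname{tr}$ and use $\operatorname{tr}(I)=2$ together with Equation~\eqref{eq:QP}. Your inductive check of \eqref{eq:QP} is valid and fills in a step the paper states without proof; the base cases $m=1,2$ are exactly what the recurrence of $P_{m-1}$ requires.
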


\begin{proof}
Apply $\lambda$ to Equation~\eqref{eq:matrix-powers}.  For
$\lambda=\operatorname{tr}$, use $\operatorname{tr}(I)=2$ and
Equation~\eqref{eq:QP}.
\end{proof}

\section{Fibonacci, Lucas, and generalized Fibonacci identities}
\label{sec:fibonacci}

Let
\[
F_0=0,\quad F_1=1,\qquad L_0=2,\quad L_1=1,
\]
and let both sequences satisfy $X_{r+1}=X_r+X_{r-1}$ for $r\geq1$.  For the Fibonacci
matrix
\[
A=\begin{pmatrix}1&1\\1&0\end{pmatrix},
\]
one has, for $n\geq1$,
\begin{equation}
A^n=\begin{pmatrix}F_{n+1}&F_n\\F_n&F_{n-1}\end{pmatrix},
\qquad
\operatorname{tr}(A^n)=L_n,
\qquad
\det(A^n)=(-1)^n.
\label{eq:fibonacci-matrix}
\end{equation}
Set $M=A^n$.  Then $A^{nm}=M^m$.

Throughout the remainder of this section, $m,n\geq1$.

\subsection{Fibonacci numbers}

Take $\lambda(X)=X_{12}$.  Since $\lambda(M)=F_n$ and
$\lambda(I)=0$, Corollary~\ref{cor:linear-functional} gives the compact
identity
\begin{equation}
F_{nm}=F_nP_m\!\left(L_n,(-1)^n\right).
\label{eq:Fcompact}
\end{equation}
Using Equation~\eqref{eq:Pexplicit}, this becomes
\begin{equation}
F_{nm}=F_n
\sum_{i=0}^{\lfloor(m-1)/2\rfloor}
\binom{m-1-i}{i}
L_n^{m-1-2i}(-1)^{i(n+1)}.
\label{eq:Fexpanded}
\end{equation}
Mc~Laughlin~\cite[Eq.~(11)]{McLaughlin2004} attributes this formula to an
online note by R.~C. Johnson from 2003.  It is also Theorem~1 of
Vorobtsov~\cite{Vorobtsov2026}.

Proposition~\ref{prop:chebyshev} now gives the correct parity-dependent form
over $\mathbb C$:
\begin{equation}
F_{nm}=
\begin{cases}
F_n U_{m-1}\!\left(L_n/2\right),& n\text{ even},\\[4pt]
F_n i^{m-1}U_{m-1}\!\left(L_n/(2i)\right),& n\text{ odd}.
\end{cases}
\label{eq:Fchebyshev}
\end{equation}

\subsection{Lucas numbers}

Taking traces in $A^{nm}=(A^n)^m$ and using
Equation~\eqref{eq:trace-powers} gives
\begin{equation}
L_{nm}=Q_m\!\left(L_n,(-1)^n\right).
\label{eq:Lcompact}
\end{equation}
Equivalently,
\begin{equation}
L_{nm}=
\sum_{i=0}^{\lfloor m/2\rfloor}
\frac{m}{m-i}\binom{m-i}{i}
L_n^{m-2i}(-1)^{i(n+1)}.
\label{eq:Lexpanded}
\end{equation}
This is the Lucas analogue stated as Theorem~2 in
Vorobtsov~\cite{Vorobtsov2026} and is the corresponding classical Dickson
polynomial identity.

\subsection{Generalized Fibonacci sequences}

Let $R$ be a commutative ring with identity, regard the integer matrix $A$ as
an element of $M_2(R)$ through the canonical map $\mathbb Z\to R$, choose
$G_0,G_1\in R$, and define
\begin{equation}
G_{r+1}=G_r+G_{r-1}\qquad (r\geq1).
\label{eq:Grec}
\end{equation}
For $r\geq1$,
\[
G_r=G_1F_r+G_0F_{r-1}.
\]
Thus the $R$-linear functional
\[
\lambda_G(X)=G_1X_{12}+G_0X_{22}
\]
satisfies $\lambda_G(A^r)=G_r$ and $\lambda_G(I)=G_0$.  Applying
Corollary~\ref{cor:linear-functional} to $M=A^n$ yields
\begin{equation}
G_{nm}=G_nP_m\!\left(L_n,(-1)^n\right)
-(-1)^nG_0P_{m-1}\!\left(L_n,(-1)^n\right).
\label{eq:Gcompact}
\end{equation}
Expanding both polynomials and shifting the index in the second sum gives
\begin{align}
G_{nm}
={}&G_n\sum_{i=0}^{\lfloor(m-1)/2\rfloor}
\binom{m-1-i}{i}L_n^{m-1-2i}(-1)^{i(n+1)}
\notag\\
&+G_0\sum_{i=1}^{\lfloor m/2\rfloor}
\binom{m-1-i}{i-1}L_n^{m-2i}(-1)^{i(n+1)}.
\label{eq:Gexpanded}
\end{align}
Here and above, a sum whose lower limit exceeds its upper limit is understood
to be empty.
This is the form appearing as Theorem~3 in
Vorobtsov~\cite{Vorobtsov2026}.  Equation~\eqref{eq:Gcompact} shows that the
two sums arise from the two scalar terms in the universal matrix reduction.

\section{Conclusion}

The identities above are instances of the standard reduction of powers of an
element satisfying a quadratic relation.  The coefficient $P_m$ is
simultaneously a generic Lucas polynomial and a Dickson polynomial of the
second kind, while $Q_m$ is its companion of the first kind.  The
Cayley--Hamilton theorem transfers these polynomial identities to
$2\times2$ matrices, and appropriate linear functionals then select the
desired scalar recurrence sequences.

This viewpoint does not replace the earlier matrix derivation of
Mc~Laughlin~\cite{McLaughlin2004}; rather, it records the derivation in a
ring-theoretic form and treats the Fibonacci, Lucas, and generalized
Fibonacci cases uniformly.

\section*{Acknowledgments}

\noindent\textbf{AI disclosure.}
OpenAI's ChatGPT was used for language editing, bibliographic organization,
formula cross-checking, and \LaTeX{} assistance.  The author independently
verified all mathematical statements, computations, and bibliographic claims
and assumes full responsibility for the manuscript.

\begingroup
\small
\bibliography{references}
\endgroup

\end{document}